\newtheorem{theorem}{Theorem}[section]
\newtheorem{lemma}[theorem]{Lemma}
\newtheorem{remark}[theorem]{Remark}
\newenvironment{proof}[1][Proof]{\noindent \textbf{#1.} }{\  \rule{0.5em}{0.5em}}
\numberwithin{equation}{section}
\newcommand{\abs}[1]{\left\vert#1\right\vert}
\newcommand{\set}[1]{\left\{#1\right\}}
\newcommand{\Real}{\mathbb R}
\newcommand{\Oo}{\Omega}
\newcommand{\oo}{\omega}
\newcommand{\F}{\mathcal{F}}
\newcommand{\Hh}{\mathcal{H}}
\newcommand{\Ss}{\mathcal{S}}
\newcommand{\1}{\mathbf 1}
\newcommand{\Ee}{\mathbf{E}}
\newcommand{\ul}{\underline}
\begin{document}

\title{A uniqueness theorem for solution of BSDEs}
\author{Guangyan JIA\thanks{%
The author thanks the partial support from the National Basic
Research Program of China (973 Program) grant No. 2007CB814901
(Financial Risk) and the National Natural Science Foundation of
China, grant No. 10671111.
email address: jiagy@sdu.edu.cn}\\
School of Mathematics and System Sciences\\
Shandong University, Jinan, Shandong, 250100, P.R.China}
\maketitle

{\small \textbf{Abstract.} In this note, we prove that if $g$ is
uniformly continuous in $z$, uniformly with respect to $(\oo,t)$ and
independent of $y$, the solution to the backward stochastic
differential equation (BSDE) with generator $g$ is unique.}

\section{Introduction}

One dimensional BSDEs are equations of the following type defined on
$[0,T]$:
\begin{equation}\label{equ0}
y_t=\xi+\int_t^T g(s,y_s,z_s)\,ds-\int_t^T z_s\,dW_s,\qquad 0\le
t\le T,
\end{equation}
where $W$ is a standard $d$-dimensional Brownian motion on a
probability space $(\Oo,\F,(\F_t)_{0\le t\le T},P)$ with
$(\F_t)_{0\le t\le T}$ the filtration generated by $W$. The function
$g:\Oo\times[0,T]\times\Real\times\Real^d\to\Real$ is called
generator of (\ref{equ0}). Here $T$ is the terminal time, and $\xi$
is a $\Real$-valued $\F_T$-adapted random variable; $(g,T,\xi)$ are
the parameters of (\ref{equ0}). The solution $(y_{t},z_{t}%
)_{t\in \lbrack0,T]}$ is a pair of $\F_t$-adapted and square
integrable processes.


Nonlinear BSDEs were first introduced by Pardoux and
Peng~\cite{PP1}, who proved the existence and uniqueness of a
solution under suitable assumptions on $g$ and $\xi$, the most
standard of which are the Lipschitz continuity of $g$ with respect
to $(y,z)$ and the square integrability of $\xi$. An interesting and
important question is to find weaker conditions rather than the
Lipschitz one, under which the BSDE (\ref{equ0}) still has a unique
solution. As a matter of fact, there have been several works, such
as Pardoux and Peng \cite{pardouxP1992}, Kobylanski \cite{KB} and
Briand-Hu \cite{briandH2007}, etc. In this note, we will give a new
sufficient condition for the uniqueness of the solution to BSDEs.

In fact, this problem came from a lecture given by Peng at a seminar
of Shandong University on Oct. 2005. In his lecture, Peng
conjectured that if $g$ is H\"{o}lder continuous in $z$ and
independent of $y$, then (\ref{equ0}) has a unique solution. In this
note, we will prove this conjecture under a more general
condition---uniform continuity---instead of H\"{o}lder continuity.
In other words, $g$ satisfies the following condition:
\begin{description}
\item[(H1).] $g(\oo,t,\cdot)$ is uniformly continuous and uniformly
with respect to $(\oo,t)$, i.e., there exists a function $\phi$ from
$\Real_+$ to itself, which is continuous, non-decreasing,
subadditive and of linear growth, and $\phi(0)=0$ such that,
$$\abs{g(\omega,t,z_1)-g(\omega,t,z_2)}\le\phi(\abs{z_1-z_2}),\ P-a.s.,\
\mbox{$\forall t\in[0,T],z_1,z_2\in\Real^d$}.$$ Here we denote the
constant of linear growth of $\phi$ by $A$, i.e., $$0\le\phi(x)\le
A(x+1)$$ for all $x\in\Real_+$ (see Crandall \cite{crandall1997}).
Moreover $(g(t,0))_{t\in[0,T]}$ is assumed to be bounded.
\end{description}
\begin{remark}
Clearly (H1) implies (H1'):
\begin{description}
\item [(H1').] $g(\oo,t,\cdot)$ is continuous, and of linear growth, i.e.,
there exists a positive real number $B$, such that
$$\abs{g(\oo,t,z)}\le B(\abs{z}+1),\quad P-a.s.,\ \mbox{for all
$(t,z)\in[0,T]\times\Real^d$}.$$
\end{description}
According to the result in \cite{LM1}, (H1') guarantees the
existence of a solution of (\ref{equ0}).
\end{remark}
This note is organized as follows. In Section 2 we formulate the
problem accurately and give some preliminary results. Finally,
Section 3 is devoted to the proof of the main theorem.

\section{Preliminaries}
Let $(\Omega,\mathcal{F},P)$ be a probability space and $W$ be a
$d$-dimensional standard Brownian motion on this space. Let
$(\mathcal{F}_{t})_{t\geq0}$ be the filtration generated by this
Brownian motion: $\mathcal{F}_{t}=\sigma \left \{W_{s},s\in \lbrack
0,t]\right \} \cup \mathcal{N}$,
$\mathbb{F}=(\mathcal{F}_{t})_{t\geq0}$, where $\mathcal{N}$ is the
set of all $P$-null subsets.

Let $T>0$ be a fixed real number. In this note, we always work in
the space $(\Omega,\mathcal{F}_{T},P)$. For a positive integer $n$
and $z\in \mathbb{R}^{n}$, we denote by $\left \vert z\right \vert $
the Euclidean norm of $z$. We will denote by $\mathcal{H}_{n}^{2}=\mathcal{H}_{n}^{2}(0,T;\mathbb{R}%
^{n})$, the space of all $\mathbb{F}$--progressively measurable $\mathbb{R}%
^{n}$--valued processes such that $\Ee\left[\int_{0}^{T}\left \vert
\psi
_{t}\right \vert ^{2}\,dt\right]<\infty$, and by $\mathcal{S}^{2}=\mathcal{S}%
^{2}(0,T;\mathbb{R})$ the elements in $\mathcal{H}_{1}^{2}$ with
continuous paths such that $\Ee\left[\sup_{t\in [0,T]}\left \vert
\psi_{t}\right \vert ^{2}\right]<\infty$.

Now, let $\xi\in L^2(\Omega,\F_T,P)$ be a terminal value,
$g:\Omega\times[0,T]\times\Real^d\to\Real$ be the generator, such
that the process $g(\omega,t,z)_{t\in[0,T]}\in\Hh_{1}^{2}$ for any
$z\in\Real^d$. A solution of a BSDE is a pair of processes
$(y_t,z_t)_{t\in[0,T]}\in\Ss^2\times\Hh_d^2$ satisfying BSDE
(\ref{equ0}).

We now introduce a useful lemma which plays an important role in
this note. First we define
$$\underline{f}_n(z)\triangleq\inf_{u\in\mathbb{Q}^d}\{f(u)+n|z-u|\}\quad\mbox{and}\quad\bar{f}_n(z)\triangleq\sup_{u\in\mathbb{Q}^d}\{f(u)-n|z-u|\},$$
where $f$ satisfies (H1) and $n\in\mathbb{N}$. Also we define
$C=\max\set{A,B}$. Then one has
\begin{lemma}\label{inf-convolutionthm}
Let $f$ satisfy (H1) and $\bar{f}_n,\underline{f}_n$ be defined as
above. Then for $n>C$,

i).  $-C(\abs{z}+1)\le\ul{f}_n(t,z)\le f(t,z)\le\bar{f}_n(t,z)\le
C(\abs{z}+1)$ P-a.s. for any $(t,z)\in[0,T]\times\Real^d$;

ii). $\ul{f}_\cdot(t,z)$ is non-decreasing and $\bar{f}_\cdot(t,z)$
is non-increasing for any $(t,z)\in[0,T]\times\Real^d$;

iii). $\abs{\bar{f}_n(t,z_1)-\bar{f}_n(t,z_2)}\le n\abs{z_1-z_2}$
and $\abs{\ul{f}_n(t,z_1)-\ul{f}_n(t,z_2)}\le n\abs{z_1-z_2}$ P-a.s.
for any $t\in[0,T]$, $z_1,z_2\in\Real^d$;

iv). If $z^n\to z$ as $n\to\infty$, then $\ul{f}_n(t,z^n)\to f(t,z)$
and $\bar{f}_n(t,z^n)\to f(t,z)$ P-a.s. as $n\to\infty$;

v). $0\le f(t,z)-\ul{f}_n(t,z)\le\phi\left(\frac{2C}{n-C}\right)$
and $0\le \bar{f}_n(t,z)-f(t,z)\le\phi\left(\frac{2C}{n-C}\right)$
P-a.s. for any $(t,z)\in[0,T]\times\Real^d$.
\end{lemma}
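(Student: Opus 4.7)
The plan is to establish the five parts in the order (iii), (ii), (i), (v), (iv), since the harder quantitative estimate (v) is cleanest once the Lipschitz and sandwich properties are in hand.

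First I would prove (iii) by the standard sup-/inf-convolution argument. For any $u\in\mathbb{Q}^d$ the reverse triangle inequality gives $f(u)-n\abs{z_1-u}\le f(u)-n\abs{z_2-u}+n\abs{z_1-z_2}$; taking the supremum over $u$ yields $\bar f_n(t,z_1)\le\bar f_n(t,z_2)+n\abs{z_1-z_2}$, and the symmetric inequality gives the Lipschitz bound. The argument for $\ul f_n$ is identical. Part (ii) is immediate from the definition: if $n_1\le n_2$ then $f(u)+n_1\abs{z-u}\le f(u)+n_2\abs{z-u}$, so taking infima gives $\ul f_{n_1}\le\ul f_{n_2}$, and the dual monotonicity for $\bar f_n$ follows analogously.

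For (i), the inner sandwich $\ul f_n(t,z)\le f(t,z)\le\bar f_n(t,z)$ is obtained by approximating $z$ by a sequence $u_k\in\mathbb{Q}^d$ with $u_k\to z$; by continuity of $f(\oo,t,\cdot)$, $f(u_k)\pm n\abs{z-u_k}\to f(t,z)$, which gives the inequality after passing to inf/sup. For the outer bounds, I would use (H1) to write $\abs{f(t,u)}\le\abs{f(t,0)}+\phi(\abs{u})\le C(\abs{u}+1)$ (possibly up to a harmless constant, which is absorbed into $C$ because $n>C$); then for the inf, $f(t,u)+n\abs{z-u}\ge -C(\abs{u}+1)+n\abs{z-u}\ge -C(\abs{z}+1)-(C-n)\abs{z-u}\ge -C(\abs{z}+1)$ when $n\ge C$, and taking infimum preserves the bound. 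The sup case is symmetric.

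The heart of the proof is (v). For the inf-convolution direction, fix $z$ and $\delta>0$, and pick $u_\delta\in\mathbb{Q}^d$ with $f(t,u_\delta)+n\abs{z-u_\delta}\le\ul f_n(t,z)+\delta$. Combining with the upper sandwich from (i), I get $f(t,u_\delta)+n\abs{z-u_\delta}\le f(t,z)+\delta$, so (H1) yields $n\abs{z-u_\delta}\le\phi(\abs{z-u_\delta})+\delta\le A(\abs{z-u_\delta}+1)+\delta$, giving $\abs{z-u_\delta}\le(A+\delta)/(n-A)$; for $n>C\ge A$ and $\delta$ small, this is at most $2C/(n-C)$. Plugging back and using (H1) and monotonicity of $\phi$, $f(t,z)-\ul f_n(t,z)\le\phi(\abs{z-u_\delta})+\delta\le\phi(2C/(n-C))+\delta$; letting $\delta\downarrow 0$ gives the estimate. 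The bound for $\bar f_n-f$ is obtained by the mirror argument. Finally, (iv) drops out of (v): $\abs{\ul f_n(t,z^n)-f(t,z)}\le\abs{\ul f_n(t,z^n)-f(t,z^n)}+\abs{f(t,z^n)-f(t,z)}\le\phi(2C/(n-C))+\abs{f(t,z^n)-f(t,z)}$, and both terms tend to zero by $\phi(0)=0$, continuity of $\phi$ at $0$, and continuity of $f(\oo,t,\cdot)$.

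The only genuinely subtle step is (v), where one must handle the nonattainment of the infimum (hence the $\delta$-approximation) and juggle the two-sided use of the sandwich (i) together with uniform continuity of $f$; everything else is a direct unpacking of definitions.
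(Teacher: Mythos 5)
Your proof is correct and, for part v) --- the only part the paper proves in detail (parts i)--iv) are cited from Lepeltier--San Martin \cite{LM1}) --- it rests on the same key estimate as the paper's: any approximate minimizer of $u\mapsto f(t,u)+n\abs{z-u}$ must lie within $\frac{2C}{n-C}$ of $z$, after which uniform continuity of $f$ and monotonicity of $\phi$ give the bound. You reach this via a $\delta$-near-minimizer whereas the paper partitions $\mathbb{Q}^d$ into $\Lambda_n=\set{u:n\abs{z-u}\ge C(\abs{z-u}+2)}$ and its complement and discards $\Lambda_n$ from the infimum, but the two arguments are interchangeable.
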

\begin{proof}
It is not hard to check i)---iv) (see \cite{LM1}).

We now prove v). It follows from (H1) that, for given
$(t,z)\in[0,T]\times\Real^d$, one has
\begin{equation}\label{eqnadd01}
f(t,u)\ge f(t,z)-\phi(\abs{z-u})\ge f(t,z)-A(\abs{z-u}+1)\ge
f(t,z)-C(\abs{z-u}+1),
\end{equation}
for any $u\in\Real^d$. Given $n>C$, we define
$$\Lambda_n\triangleq\set{u\in\mathbb{Q}^d:n\abs{z-u}\ge
C(\abs{z-u}+2)}.$$ Clearly, $\Lambda_n$ is not empty and
$\mathbb{Q}^d=\Lambda_n\cup\Lambda_n^c$ where
$$\Lambda_n^c=\set{u\in\mathbb{Q}^d:n\abs{z-u}<C(\abs{z-u}+2)}$$ is
the complementary set of $\Lambda_n$ (which is not empty too). For
any $u\in\Lambda_n$, it follows from (\ref{eqnadd01}) that,
$$f(u)+n\abs{z-u}\ge f(u)+C(\abs{z-u}+2)\ge f(z)+C.$$
Then by i) of this lemma, one has for any $u\in\Lambda_n$,
$$f(t,u)+n\abs{z-u}>f(t,z)+\frac{C}{2}>f(t,z)\ge \inf_{v\in\Lambda_n\cup\Lambda_n^c}\set{f(t,v)+n\abs{z-v}}.$$
Therefore
\begin{eqnarray*}
\underline{f}_n(t,z)&=&\inf_{u\in\Lambda_n\cup\Lambda_n^c}\set{f(t,u)+n|z-u|}=\inf_{u\in\Lambda_n^c}\set{f(t,u)+n|z-u|}\\
&=&\inf\{f(t,u)+n|z-u|:u\in\mathbb{Q}^d\mbox{ and }n\abs{z-u}<
C(\abs{z-u}+2)\}\\
&\ge&\inf\{f(t,u):u\in\mathbb{Q}^d\mbox{ and }n\abs{z-u}<
C(\abs{z-u}+2)\}\\
&\ge&\inf\{f(t,z)-\phi(\abs{z-u}):u\in\mathbb{Q}^d\mbox{ and
}\abs{z-u}\le \frac{2C}{n-C}\}\\
&=&f(t,z)-\phi\left(\frac{2C}{n-C}\right).
\end{eqnarray*}
Analogously we can prove the second part of vi). The proof is
complete.
\end{proof}
\begin{remark}
If $f$ satisfies (H1), then for any $(t,z)\in[0,T]\times\Real^d$ and
$n>C$,
$$0\le\bar{f}_n(t,z)-\ul{f}_n(t,z)\le
2\phi\left(\frac{2C}{n-C}\right),\quad P-a.s.$$
\end{remark}
\section{Main Theorem}
To begin with, we introduce two sequences of BSDE as follows:
\begin{equation}\label{yilowersolution}
\ul{y}_t^{n}=\xi+\int_t^T{\ul{g}_n(s,\ul{z}_s^n)}ds-\int_t^T{\ul{z}_s^n}dW_s
\end{equation}
and
\begin{equation}\label{yisuppersolution}
\bar{y}_t^n=\xi+\int_t^T\bar{g}_n(s,\bar{z}_s^n)\,ds-\int_t^T\bar
{z}_s^n\,dW_s
\end{equation}

Clearly, for any given $n>C$, both (\ref{yilowersolution})
 and (\ref{yisuppersolution}) have unique
adapted solutions, for which we denote them by
$(\ul{y}^n_t,\ul{z}^n_t)_{t\in[0,T]}$ and
$(\bar{y}^n_t,\bar{z}^n_t)_{t\in[0,T]}$ respectively. Moreover we
denote the maximal solution and the minimal one of (\ref{equ0})
respectively by $(\bar{y}_t,\bar{z}_t)_{t\in[0,T]}$ and
$(\ul{y}_t,\ul{z}_t)_{t\in[0,T]}$, and any given solution of
(\ref{equ0}) by $(y_t,z_t)_{t\in[0,T]}$. We now have the following
lemma.\vspace{0.1cm}
\begin{lemma}\label{lowersolutionproperties1}
Let $g$ satisfy (H1) and $\xi\in L^2(\Oo,\F_T,P)$. Then one has,

i). For $t\in[0,T]$ and
$n>C$,
$$\bar{y}_t^n\ge\bar{y}_t^{n+1}\ge\bar{y}_t\ge
y_t\ge\ul{y}_t\ge\ul{y}_t^{n+1}\ge\ul{y}_t^{n},\quad P-a.s.$$
Moreover,
$$\Ee\left[\abs{\bar{y}_t^n-\bar{y}_t}^2\right]+\Ee\left[\int_0^T\abs{\bar{z}_t^n-\bar{z}_t}^2\,dt\right]\to
0$$ and
$$\Ee\left[\abs{\ul{y}_t^n-\ul{y}_t}^2\right]+\Ee\left[\int_0^T\abs{\ul{z}_t^n-\ul{z}_t}^2\,dt\right]\to
0$$ as $n\to\infty$;

ii). In addition, there exists some positive constant $M_0$
depending only on $C$, $T$ and $\xi$, such that
$$E\left[\abs{\bar{y}_t^n}^2\right]\le M_0\quad
E\left[\int_0^T\abs{\bar{z}_t^n}^2\,dt\right]\le M_0$$ and
$$\Ee\left[\abs{\ul{y}_t^n}^2\right]\le M_0,\quad
E\left[\int_0^T\abs{\ul{z}_t^n}^2\,dt\right]\le M_0$$ for any $n>C$;

iii). For any $n>C$,
$$\Ee\left[\abs{\bar{y}_t^n-\ul{y}_t^n}\right]\le
2\phi\left(\frac{2C}{n-C}\right)T.$$
\end{lemma}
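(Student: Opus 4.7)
The three parts are tackled in order, exploiting that $\bar g_n,\ul g_n$ are $n$-Lipschitz with uniform linear-growth constant $C$ and sandwich $g$ with gap $\phi(2C/(n-C))\to 0$, so the classical Pardoux--Peng theory and comparison theorem apply to (\ref{yilowersolution})--(\ref{yisuppersolution}), and a Girsanov linearization bridges back to (\ref{equ0}).

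\emph{Part i).} Monotonicity in $n$ follows from the classical Lipschitz comparison theorem applied to $\bar g_n\ge\bar g_{n+1}$ and $\ul g_n\le\ul g_{n+1}$ (ii) of Lemma \ref{inf-convolutionthm}). To sandwich with an arbitrary solution $(y,z)$ of (\ref{equ0}), for which $g$ need not be Lipschitz, I linearize
$$\bar g_n(s,\bar z^n_s)-g(s,z_s)=\alpha_s\cdot(\bar z^n_s-z_s)+h_s,$$
with $|\alpha_s|\le n$ from $n$-Lipschitz continuity of $\bar g_n$ and $h_s:=\bar g_n(s,z_s)-g(s,z_s)\in[0,\phi(2C/(n-C))]$ from i) and v) of Lemma \ref{inf-convolutionthm}. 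A Girsanov change of measure (valid since $\alpha$ is bounded) gives $0\le \bar y_t^n-y_t=\Ee^{Q_n}[\int_t^T h_s\,ds \mid \F_t]\le \phi(2C/(n-C))T$ pointwise, which in particular yields $\bar y^n\ge\bar y$ and the uniform convergence $\bar y^n\to\bar y$. Itô's formula applied to $|\bar y^n-\bar y|^2$, combined with the uniform $\mathcal{H}^2$ bounds of part ii) below, then gives $\Ee\int_0^T|\bar z^n-\bar z|^2\,ds=O(\phi(2C/(n-C)))\to 0$. The $\ul{}$ case is symmetric.

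\emph{Part ii).} Routine a priori estimate: since $|\bar g_n(s,z)|\le C(|z|+1)$ uniformly in $n$ by i) of Lemma \ref{inf-convolutionthm}, Itô on $|\bar y^n_t|^2$ with Young's inequality to absorb the $|\bar z^n|^2$ term produces a bound $M_0=M_0(C,T,\Ee|\xi|^2)$ independent of $n$; identical for $\ul y^n$.

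\emph{Part iii), the main computation.} Set $\Delta y:=\bar y^n-\ul y^n\ge 0$ (by i)) and $\Delta z:=\bar z^n-\ul z^n$. Subtracting (\ref{yilowersolution}) from (\ref{yisuppersolution}) and splitting
$$\bar g_n(s,\bar z^n_s)-\ul g_n(s,\ul z^n_s)=\underbrace{[\bar g_n(s,\bar z^n_s)-\ul g_n(s,\bar z^n_s)]}_{=:\delta_s}+\underbrace{[\ul g_n(s,\bar z^n_s)-\ul g_n(s,\ul z^n_s)]}_{=\alpha_s\cdot\Delta z_s},$$
with $0\le\delta_s\le 2\phi(2C/(n-C))$ by the Remark following Lemma \ref{inf-convolutionthm} and $|\alpha_s|\le n$ by $n$-Lipschitz continuity of $\ul g_n$, Girsanov under $Q$ with $dQ/dP=\mathcal{E}(\int_0^\cdot\alpha_s\,dW_s)_T$ yields
$$\Delta y_t=\Ee^Q\!\left[\int_t^T\delta_s\,ds \,\Big|\, \F_t\right]\le 2\phi(2C/(n-C))T,\quad Q\text{-a.s.\ hence }P\text{-a.s.},$$
and taking $P$-expectation gives the claim. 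The only delicate step is Girsanov---made routine by the boundedness of $\alpha$---which was also the workhorse of part i).
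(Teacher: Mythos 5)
Your part iii) is essentially the paper's own proof: the same splitting of $\bar g_n(s,\bar z^n_s)-\ul g_n(s,\ul z^n_s)$ into a nonnegative term bounded by $2\phi\left(\frac{2C}{n-C}\right)$ plus a linear term $\alpha_s\cdot\Delta z_s$ with $\alpha$ built from componentwise difference quotients and bounded via the $n$-Lipschitz property, followed by the exponential-martingale representation of the resulting linear BSDE. The paper phrases that last step as It\^o applied to $q^n_t\hat y^n_t$ with $q^n$ the stochastic exponential rather than as an explicit change of measure, but it is the same computation, and your pointwise bound $\Delta y_t\le 2\phi\left(\frac{2C}{n-C}\right)T$ is marginally stronger than the paper's bound in expectation. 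Where you genuinely diverge is in parts i) and ii): the paper simply cites Lepeltier--San Martin for both, whereas you supply arguments. Your part ii) is the standard a priori estimate. Your part i) replaces the monotone-approximation route of \cite{LM1} by linearizing $\bar g_n(s,\bar z^n_s)-g(s,z_s)$ against an \emph{arbitrary} solution $(y,z)$ of (\ref{equ0}) and extracting the quantitative squeeze $0\le\bar y^n_t-y_t\le\phi\left(\frac{2C}{n-C}\right)T$. This buys more than the cited result: it gives an explicit rate, it yields iii) immediately by writing $\bar y^n-\ul y^n=(\bar y^n-y)+(y-\ul y^n)$, and it already contains Theorem \ref{uniquenessthm}, since any two solutions are then within $2\phi\left(\frac{2C}{n-C}\right)T$ of each other for every $n>C$. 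The one point you should make explicit (the paper leaves it implicit as well) is that $\int\Delta z\,d\widetilde W$ is a true martingale under the new measure; this is routine because the Girsanov density lies in every $L^p$ and $\Delta z\in\Hh_d^2$.
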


\begin{proof}
The proofs of i) and ii) can be found in \cite{LM1}. We now prove
iii). Here we always assume $n>C$. By (\ref{yilowersolution}) and
(\ref{yisuppersolution}),
\begin{equation}\label{mainthm01}
\bar{y}_t^n-\ul{y}_t^n=\int_t^T(\bar{g}_n(s,\bar{z}_s^n)-\ul{g}_n(s,\ul{z}_s^n))\,ds-\int_t^T(\bar{z}_s^n-\ul{z}_s^n)\,dW_s,\quad
t\in[0,T].
\end{equation}
Note that
\begin{align*}
\bar{g}_n(s,\bar{z}_s^n)-\ul{g}_n(s,\ul{z}_s^n)&=\ul{g}_n(s,\bar{z}_s^n)-\ul{g}_n(s,\ul{z}_s^n)+\bar{g}_n(s,\bar{z}_s^n)-\ul{g}_n(s,\bar{z}_s^n)\\
&=\ul{g}_n(s,\bar{z}_s^n)-\ul{g}_n(s,\ul{z}_s^n)+\hat{g}_t^n,
\end{align*}
where
$\hat{g}_t^n:=\bar{g}_n(s,\bar{z}_s^n)-\ul{g}_n(s,\bar{z}_s^n)$. It
follows from v) of Lemma \ref{inf-convolutionthm} that $$0\le
\hat{g}_t^n\le 2\phi\left(\frac{2C}{n-C}\right),\quad P-a.s.\
\forall t\in[0,T].$$

We set $\hat{y}_t^n\triangleq\bar{y}_t^n-\ul{y}_t^n$,
$\hat{z}_t^n\triangleq\bar{z}_t^n-\ul{z}_t^n$, and denote by
$\bar{z}_t^{n,i}$,$\ul{z}_t^{n,i}$ the components of $\bar{z}_t^{n}$
and $\ul{z}_t^{n}$ respectively. Define
$$z_t^{n,0}\triangleq\bar{z}_t^{n},\quad
z_t^{n,i}\triangleq(\ul{z}_t^{n,1},\cdots,\ul{z}_t^{n,i},\bar{z}_t^{n,i+1},\cdots,\bar{z}_t^{n,d})$$
and $$b_t^{n,i}\triangleq
\1_{\set{\bar{z}_t^{n,i}\ne\ul{z}_t^{n,i}}}\frac{\ul{g}_n(t,z_t^{n,i-1})-\ul{g}_n(t,z_t^{n,i})}{\bar{z}_t^{n,i}-\ul{z}_t^{n,i}}.
$$
for $1\le i\le d$ where $\1$ is the indicator function. The equation
(\ref{mainthm01}) can rewritten as
$$\hat{y}_t^n=\int_t^T(b_s^n\hat{z}_s^n+\hat{g}_s^n)\,ds-\int_t^T\hat{z}_s^n\,dW_s,$$
for $t\in[0,T]$ where $b_s^n:=(b_s^{n,1},\cdots,b_s^{n,d})$
($i=1,\cdots,d$).

We now set $$q_t^n:=\exp\left[\int_0^t
b_s^n\,dW_s-\frac{1}{2}\int_0^t\abs{b_s^n}^2\,ds\right].$$ Since
$\ul{g}_n$ satisfies a Lipschitz condition, $\abs{b_s^n}\le n$ for
any given $n$. Applying It\^{o} formula to $q_t^n\hat{y}_t^n$ on
$[t,T]$ and then taking conditional expectation yields
\begin{align*}
\hat{y}_t^n&=(q_t^n)^{-1}\Ee\left[\int_t^T
q_s^n\hat{g}_s^n\,ds|\F_t\right]\\
&=\Ee\left[\int_t^T\exp\left(\int_t^s
b_r^n\,dW_r-\frac{1}{2}\int_t^s\abs{b_r^n}^2\,dr
\right)\hat{g}_s^n\,ds|\F_t\right].
\end{align*}
It follows from the property of exponential martingale that, for
$s\ge t$, $$\Ee\left[\exp\left(\int_t^s
b_r^n\,dW_r-\frac{1}{2}\int_t^s\abs{b_r^n}^2\,dr\right)\right]=1.$$
Therefore,
\begin{eqnarray*}
\Ee\left[\hat{y}_t^n\right]&=&\Ee\left[\Ee\left[\int_t^T\exp\left(\int_t^s
b_r^n\,dW_r-\frac{1}{2}\int_t^s\abs{b_r^n}^2\,dr\right)\hat{g}_s^n\,ds
|\F_t\right]\right]\\
&=&\Ee\left[\int_t^T\exp\left(\int_t^s
b_r^n\,dW_r-\frac{1}{2}\int_t^s\abs{b_r^n}^2\,dr
\right)\hat{g}_s^n\,ds\right]\\
&\le&2\phi\left(\frac{2C}{n-C}\right)\Ee\left[\int_t^T\exp\left(\int_t^s
b_r^n\,dW_r-\frac{1}{2}\int_t^s\abs{b_r^n}^2\,dr\right)\,ds\right]\\
&\le& 2\phi\left(\frac{2C}{n-C}\right)T.
\end{eqnarray*}
The proof is complete.
\end{proof}

The following result is our main theorem.\vspace{0.1cm}
\begin{theorem}\label{uniquenessthm}
Let $g$ satisfy (H1) and $\xi\in L^2(\Oo,\F_T,P)$. Then the solution
of (\ref{equ0}) is unique.
\end{theorem}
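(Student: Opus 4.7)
My plan is to deduce the theorem by showing first that any two solutions must share the same $y$-component, and then promoting this to uniqueness of the $z$-component via the uniqueness of the semimartingale decomposition.

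The starting point is the sandwiching property established in Lemma \ref{lowersolutionproperties1}.i), which gives, for every $n>C$ and every solution $(y,z)$ of (\ref{equ0}),
$$\ul{y}_t^n\le \ul{y}_t\le y_t\le \bar{y}_t\le \bar{y}_t^n,\quad P-a.s.$$
Combining this with Lemma \ref{lowersolutionproperties1}.iii), one obtains
$$0\le \Ee[\bar{y}_t-\ul{y}_t]\le \Ee[\bar{y}_t^n-\ul{y}_t^n]\le 2\phi\!\left(\tfrac{2C}{n-C}\right)T.$$
Since $\phi$ is continuous at $0$ with $\phi(0)=0$ and $\tfrac{2C}{n-C}\to 0$ as $n\to\infty$, letting $n\to\infty$ forces $\Ee[\bar{y}_t-\ul{y}_t]=0$, hence $\bar{y}_t=\ul{y}_t$ $P$-a.s.\ for every $t\in[0,T]$. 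Both processes lie in $\Ss^2$ and thus have continuous paths, so $\bar{y}\equiv\ul{y}$ as processes, and the sandwiching forces $y_t=\bar{y}_t=\ul{y}_t$ for the $y$-component of any solution.

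To conclude uniqueness of $z$, let $(y^1,z^1)$ and $(y^2,z^2)$ be two solutions of (\ref{equ0}). By the previous paragraph, $y^1\equiv y^2$. Subtracting the two BSDEs yields, for every $t\in[0,T]$,
$$0=y_t^1-y_t^2=\int_t^T\bigl(g(s,z_s^1)-g(s,z_s^2)\bigr)\,ds-\int_t^T\bigl(z_s^1-z_s^2\bigr)\,dW_s.$$
Equivalently, the continuous semimartingale
$$\int_0^t\bigl(z_s^1-z_s^2\bigr)\,dW_s-\int_0^t\bigl(g(s,z_s^1)-g(s,z_s^2)\bigr)\,ds$$
is identically zero. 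By uniqueness of the canonical decomposition of a continuous semimartingale, its martingale part $\int_0^t(z_s^1-z_s^2)\,dW_s$ vanishes, and computing its quadratic variation gives $\Ee\!\left[\int_0^T|z_s^1-z_s^2|^2\,ds\right]=0$, so $z^1=z^2$ in $\Hh_d^2$.

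The substantive step is the first one: once the squeeze $\Ee[\bar{y}_t^n-\ul{y}_t^n]\le 2\phi(\tfrac{2C}{n-C})T$ is in hand, the rest is automatic. The main obstacle, which is already absorbed into the preceding lemmas, was constructing Lipschitz upper and lower approximations $\bar{g}_n,\ul{g}_n$ with a \emph{quantitative} gap controlled by $\phi$; here we simply harvest that quantitative control, using only continuity of $\phi$ at $0$ rather than any modulus-type estimate, which is what allows the uniform-continuity hypothesis (H1) to replace the Lipschitz assumption.
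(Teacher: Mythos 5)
Your proof is correct and follows essentially the same route as the paper: both arguments reduce uniqueness to Lemma \ref{lowersolutionproperties1}, combining part i) with the quantitative bound in part iii) and letting $n\to\infty$. The only differences are minor and in your favour: you squeeze $\Ee\left[\bar{y}_t-\ul{y}_t\right]$ directly between $0$ and $2\phi\left(\frac{2C}{n-C}\right)T$ using the monotone ordering, whereas the paper inserts the $L^2$-convergences $\bar{y}^n\to\bar{y}$ and $\ul{y}^n\to\ul{y}$ from part i) via the triangle inequality; and you additionally spell out the uniqueness of the $z$-component (vanishing of the martingale part and of its quadratic variation), a step the paper leaves implicit.
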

\begin{proof}
From Lemma \ref{lowersolutionproperties1}-iii), it follows that
$\Ee\left[\abs{\bar{y}_t^n-\ul{y}_t^n}\right]\to 0$ as $n\to\infty$
for $t\in[0,T]$. Therefore
\begin{eqnarray*}
\Ee\left[\abs{\bar{y}_t-\ul{y}_t}\right]\le
\Ee\left[\abs{\bar{y}_t-\bar{y}_t^n}\right]+\Ee\left[\abs{\bar{y}_t^n-\ul{y}_t^n}\right]+\Ee\left[\abs{\ul{y}_t^n-\ul{y}_t}\right]\to
0,
\end{eqnarray*}
as $n\to\infty$ for $t\in[0,T]$. The proof is complete.
\end{proof}
\begin{remark}
In the case when $g$ depends on $y$ and is uniformly continuous
condition in $y$, the uniqueness of solution does not hold in
general. For example, let us consider the following equation:
$$y_t=\int_t^1 \sqrt{\abs{y_s}}ds-\int_t^1 z_s\,dW_s\quad\mbox{ for
$t\in[0,1]$}.$$ Clearly, $g(y)=\sqrt{\abs{y}}$ is uniformly
continuous. It is not hard to check that for each $c\in[0,1]$,
$$\left(y_t,z_t\right)_{t\in[0,1]}=\left(\left[\max(0,\frac{c-t}{2})\right]^2,0\right)_{t\in[0,1]}$$ is a solution
of the above BSDE.

Certainly, if $g$ is Lipschitz continuous with respect to $y$ or
satisfies some kind of monotonic condition just like used in
\cite{pardoux1996}, the result in Theorem \ref{mainthm01} also holds
true, this point is not difficult to be found in the proofs of
Theorem \ref{mainthm01} and Lemma \ref{lowersolutionproperties1}.
\end{remark}
\begin{remark}
It is worth noting that there is an important difference between the
BSDE satisfying standard condition and the BSDE discussed in this
note: although we still have the associated comparison theorem for
this kind of BSDEs, the associated strict comparison theorem (see
\cite[(ii) of Proposition 2.1]{coquetHMP2002}) (which says, if
$\xi_1\ge\xi_2$ P-a.s. and $P(\xi_1>\xi_2)>0$, then
$y_0^{\xi_1}>y_0^{\xi_2}$ where
$(y_t^{\xi_i},z_t^{\xi_i})_{t\in[0,T]}$ denotes the solution of
$(g,T,\xi_i), i=1,2$) does not hold in general.

For example, let us consider a BSDE as follows:
$$y_t^X=X+\int_t^T\frac{3}{2}\abs{z_s^X}^{2/3}-\int_t^Tz_s^XdW_s,$$
where $W$ is a one-dimensional Brownian motion,
$g=\frac{3}{2}\abs{z}^{2/3}$. It is not hard to check that  for each
constant $c\in\Real$,
$$(y_t,z_t)_{t\in[0,T]}=\left(c-\frac{1}{4}W_t^4,-W_t^3\right)_{t\in[0,T]}$$ is
the solution of $(g,T,c-\frac{1}{4}W_T^4)$, hence
$y_0^{c-\frac{1}{4}W_T^4}=y_0^{c}=c$. But $c\ge c-\frac{1}{4}W_T^4$
P-a.s. and $P(c>c-\frac{1}{4}W_T^4)>0$. Economically, this means
that there exist infinitely many opportunities of arbitrage.

More detailed discussions about this phenomenon and the
corresponding PDE problem will appear in another paper.
\end{remark}

\section*{Acknowledgements}
The author would like to thank Professor S. Peng for his helps and
comments.


\begin{thebibliography}{999}
\bibitem{briandH2007}
P. Briand and Y. Hu, Quadratic BSDEs with convex generators and
unbounded terminal conditions, \emph{Available in
arXiv:math.PR/0703423v1}, 2007.

\bibitem{coquetHMP2002}
F. Coquet, Y. Hu, J. M\'{e}min and S. Peng, Filtration consistent
nonlinear expectations and related $g$-expectations, \emph{Probab.
Theory Related Fields}, 123:1-27, 2002.

\bibitem{crandall1997}
M.G. Crandall, Viscosity solutions---a primer, In \emph{Viscosity
Solutions and Applications}(I. Capuzzo Dolcetta and P.L. Lions Ed.)
Lecture Notes in Mathematics,Vol.1660, 1--43, Springer, Berlin,
1997.

\bibitem{KB}
M. Kobylanski, Backward stochastic differential equations and
partial differential equations with quadratic growth. \emph{Ann.
Probab.}, 28:259-276, 2000.

\bibitem{LM1}
J.P. Lepeltier and J. San Martin, Backward stochastic differential
equations with continuous coefficients, \emph{Statist.\& Probab.
Lett.}, 32(4):425-430, 1997.

\bibitem{pardoux1996}
E. Pardoux, Backward stochastic differential equations and viscosity
solutions of system of semilinear parabolic and elliptic PDEs of
second order. In \emph{Stochastic Analysis and Related Topics, VI},
pp.79-128, Birkhauser, 1996.

\bibitem{PP1}
E. Pardoux and S. Peng, Adapted solutions of a backward stochastic
differential equations, \emph{System \& Control Letters},
14(1):55-61,1990.

\bibitem{pardouxP1992}
E. Pardoux and S. Peng, Some backward SDEs with non-Lipschitz
coefficients, In \emph{Proc. Conf. Metz}, 1992.

\end{thebibliography}
\end{document}